\theoremstyle{definition}
\newtheorem{defin}{Definition}[section]
\theoremstyle{plain}
\newtheorem{thm}[defin]{Theorem}
\newtheorem{cor}[defin]{Corollary}
\newtheorem{prop}[defin]{Proposition}
\newtheorem{lemma}[defin]{Lemma}
\theoremstyle{remark}
\newtheorem{ex}[defin]{Example}
\newtheorem{remark}[defin]{Remark}
\newcounter{num}
\title[Resolutions in $\mathbb
  P^1\times \mathbb P^1$]{Minimal free resolutions of $0$-dimensional schemes in $\mathbb
  P^1\times \mathbb P^1$}
\author{Paola Bonacini}
\email{bonacini@dmi.unict.it}
\author{Lucia Marino}
\email{lmarino@dmi.unict.it}
\address{Università degli Studi di Catania\\
  Viale A. Doria 6\
95125 Catania\\
Italy}
\keywords{Resolution, Hilbert function, separators}
\subjclass[2000]{13D02; 13D40; 13H10}
\begin{document}
\maketitle
\begin{abstract}
Let $X$ be a zero-dimensional scheme in $\mathbb P^1\times \mathbb
P^1$. Then $X$ has a minimal free resolution of length $2$
if and only if $X$ is ACM. In this paper we determine a class of
reduced schemes whose resolutions, similarly to the ACM case, can be obtained by their Hilbert
functions and depends only on their
distributions of points in a grid of lines. Moreover, a minimal set of
generators of the ideal of these schemes is given by curves split into
the union of lines. 
\end{abstract}

\section{Introduction}

Let $X\subset \mathbb P^1\times \mathbb P^1$ be a
zero-dimensional scheme. Then it is well known that $X$ has a minimal free resolution of length $2$
if and only if $X$ is ACM (see Giuffrida, Maggioni and Ragusa
\cite{GMR}, Guardo and Van Tuyl \cite{GVT2} and Van Tuyl \cite{VT2}). Other results about minimal
free resolutions has been obtained for points in generic position (see Giuffrida, Maggioni and Ragusa
\cite{GMR2} and \cite{GMR3}) and double points with ACM support in
Guardo and Van Tuyl \cite{GVT2}. 

If $X$ is an ACM zero-dimensional scheme, then its free resolution can
be computed by looking at its Hilbert function and, at least if $X$ is
reduced, it depends only on
the distribution of the points of $X$ on a grid of $(1,0)$ and
$(0,1)$-lines. In this paper we study a class of reduced
zero-dimensional schemes having these properties in common with ACM
schemes. So in Theorem \ref{T:1} we determine the Hilbert function and the minimal free
resolution of these schemes and we show the connection between
generators' and syzygies' degrees and the negative entries of the
first difference of the Hilbert function. We also show that, similarly
to ACM case, as minimal generators of the ideal of these schemes we
can take curves split into the union of $(1,0)$ and $(0,1)$-lines.

The starting point is Lemma \ref{L:1} in which, given a
zero-dimensional scheme $Y$ and a point $P\in Y$, we determine the
minimal free resolution of $Y\setminus\{P\}$, starting 
from the minimal free resolution of $Y$. This is proved under the
conditions that $P$ has just one minimal separator for $Y$ and that the
separator has a suitable degree. 
This result leads us to determine some reduced
schemes whose minimal free resolution depends only on the distribution of these points 
on any  grid of $(1,0)$ and $(0,1)$-lines, similarly to what happens
for ACM schemes. These schemes are obtained by erasing non collinear
points of reduced ACM schemes and are the schemes studied in Theorem
\ref{T:1}. In Example \ref{E:3} we show that by deleting two collinear
points of a reduced ACM schemes we get a scheme whose resolution is
not as in Theorem \ref{T:1} and there is no correspondence between
generators and negative entries of the
first difference of the Hilbert function.

\section{Notation and preliminary results}

Given an algebraically closed field $k$ and given $\mathbb P^1=\mathbb
P^1_k$, let $Q=\mathbb P^1\times \mathbb P^1$ and let
$\mathscr O_Q$ be its structure sheaf. Let us consider the bi-graded
ring $S=H^0_*\mathscr O_Q=\bigoplus_{a,b\ge 0}H^0\mathscr
O_Q(a,b)$. For any bi-graded $S$-module $N$ let $N_{i,j}$ be the component of
degree $(i,j)$. For any $(i_1,j_1)$, $(i_2,j_2)\in \mathbb N^2$ we write $(i_1,j_1)\ge
(i_2,j_2)$ if $i_1\ge i_2$ and $j_1\ge j_2$ and we say that
$(i_1,j_1)$ and $(i_2,j_2)$ are \emph{comparable}. Given a zero-dimensional scheme $X\subset Q$, let $I_X\subset S$ be the
associated saturated ideal and let $S(X)=S/I_X$. 
\begin{defin}
  The function $M_X\colon \mathbb Z\times \mathbb Z\rightarrow \mathbb
  N$ defined by: 
\[
M_X(i,j)=\dim_k {S(X)}_{i,j}=\dim_k S_{i,j}-\dim_k
  (I_X)_{i,j}
\] 
is called the \emph{Hilbert function} of $X$. The function $M_X$ can be
represented as an infinite matrix with integers entries
$M_X=(M_X(i,j))=(m_{ij})$. 
\end{defin}
Note that $M_X(i,j)=0$ for either $i<0$ or $j<0$ and so we restrict
ourselves to the range $i\ge 0$ and $j\ge 0$. Moreover, for $i\gg 0$
and $j\gg 0$ $M_X(i,j)=\deg X$.

\begin{defin}
 Given the Hilbert function $M_X$ of a zero-dimensional scheme $X\subset Q$, the \emph{first difference of the Hilbert function} of $X$ is the
  matrix $\Delta M_X=(c_{ij})$, where $c_{ij}=m_{ij}-m_{i-1j}-m_{ij-1}+m_{i-1j-1}$.
\end{defin}

We consider the matrices $\Delta^R M_X=(a_{ij})$ and $\Delta^C
M_X=(b_{ij})$, with $a_{ij}=m_{ij}-m_{ij-1}$ and
$b_{ij}=m_{ij}-m_{i-1j}$. Note that
$c_{ij}=a_{ij}-a_{i-1j}=b_{ij}-b_{ij-1}$, $m_{ij}=\sum_{h\le i,\, k\le
  j}c_{hk}$. 
% and that:
% \[
%   a_{ij}=\sum_{h\le i} c_{hj} \mbox{ and }b_{ij}=\sum_{k\le j}c_{ik}.
% \]

\begin{thm}[{\cite[Theorem 2.11]{GMR}}]  \label{T0}
  Given a zero-dimensional scheme $X\subset Q$ and given its Hilbert function
  $M_X$, the first difference $\Delta M_X=(c_{ij})$ satisfies the
  following conditions:
  \begin{enumerate}
  \item $c_{ij}\le 1$ and $c_{ij}=0$ for $i\gg 0$ or $j\gg 0$;
\item if $c_{ij}\le 0$, then $c_{rs}\le 0$ for any $(r,s)\ge (i,j)$;
\item for every $(i,j)$ $0\le \sum_{t=0}^j c_{it}\le
  \sum_{t=0}^jc_{i-1t}$ and $0\le \sum_{t=0}^i c_{tj}\le \sum_{t=0}^i c_{tj-1}$.
  \end{enumerate}
\end{thm}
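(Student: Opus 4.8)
The plan is to reduce all three conditions to elementary facts about Hilbert functions of quotients of the polynomial ring in two variables, via general linear forms in each of the two rulings. Write $S=k[x_0,x_1]\otimes_k k[y_0,y_1]$, the $x_i$ in bidegree $(1,0)$ and the $y_i$ in bidegree $(0,1)$, and recall $\sum_{t=0}^{j}c_{it}=b_{ij}=m_{ij}-m_{i-1,j}$ and $\sum_{t=0}^{i}c_{tj}=a_{ij}=m_{ij}-m_{i,j-1}$, while $m_{ij}=\dim_k S(X)_{i,j}\le\deg X$ since $S(X)_{i,j}\hookrightarrow H^0\mathscr O_X(i,j)$. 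The starting point is that a \emph{general} form $x\in S_{1,0}$ is a nonzerodivisor on $S(X)$: since $X$ is zero-dimensional and $I_X$ is saturated, $S(X)$ has no embedded primes, so its associated primes are exactly the prime ideals of the points of $X$, and a general $(1,0)$-form vanishes at none of them. Hence the sequence $0\to S(X)(-1,0)\xrightarrow{x}S(X)\to\overline S(X)\to 0$, with $\overline S(X):=S(X)/xS(X)$, is exact; comparing dimensions in bidegree $(i,j)$ gives $\dim_k\overline S(X)_{i,j}=b_{ij}$, and in particular $m_{i-1,j}\le m_{ij}$, i.e.\ $b_{ij}\ge 0$. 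Symmetrically $a_{ij}\ge 0$ using a general $(0,1)$-form.

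Next I would read off the behaviour of $\overline S(X)$ one degree-row at a time. Since $S/xS\cong k[u]\otimes_k k[y_0,y_1]$ with $\deg u=(1,0)$, for each fixed $i$ the graded piece $\bigoplus_j\overline S(X)_{i,j}$ is a \emph{cyclic} graded $k[y_0,y_1]$-module, hence of the form $k[y_0,y_1]/J_i$ for a homogeneous ideal $J_i$; and $J_i\ne 0$ because $b_{ij}\le\deg X$ keeps this Hilbert function bounded. The key input is then the elementary two-variable lemma: for a nonzero homogeneous ideal $J\subseteq k[y_0,y_1]$, the function $j\mapsto\dim_k(k[y_0,y_1]/J)_j$ has all first differences $\le 1$, and once a first difference is $\le 0$ every later one is $\le 0$. (The first part holds because multiplication by $y_0$ is injective on $J$; the second because $\dim_k(y_0V+y_1V)\ge\dim_k V+1$ for every nonzero homogeneous $V\subseteq k[y_0,y_1]$, so the quotient is nonincreasing from the first degree in which $J$ does not vanish.) Applied to $J_i$ this yields $c_{ij}=b_{ij}-b_{i,j-1}\le 1$, which is the first half of (1), together with the implication: if $c_{ij}\le 0$ then $c_{i,j'}\le 0$ for all $j'\ge j$.

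The remaining inequality $b_{ij}\le b_{i-1,j}$ in (3) comes from surjectivity rather than injectivity: choosing $x'\in S_{1,0}$ with $\{x,x'\}$ a basis of $S_{1,0}$, one has $S_{1,0}\cdot S(X)_{i-1,j}=S(X)_{i,j}$ (already true before quotienting by $I_X$), so multiplication by $x'$ carries $\overline S(X)_{i-1,j}$ onto $\overline S(X)_{i,j}$, whence $b_{ij}\le b_{i-1,j}$; symmetrically $a_{ij}\le a_{i,j-1}$. This proves (3). Moreover, for fixed $j$ the sequence $(b_{ij})_i$ is nonincreasing, nonnegative, and has sum $\lim_i m_{ij}\le\deg X$, so it vanishes for $i\ge\deg X$; dually $c_{ij}=0$ for $j\ge\deg X$, completing (1). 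Finally, for the persistence statement (2) I would rerun the whole argument with the two factors exchanged (a general $(0,1)$-form $y$, row-modules $\bigoplus_i(S(X)/yS(X))_{i,j}\cong k[x_0,x_1]/J'_j$), obtaining: if $c_{ij}\le 0$ then $c_{i',j}\le 0$ for all $i'\ge i$. Then, given $c_{ij}\le 0$, the column analysis gives $c_{i,j'}\le 0$ for every $j'\ge j$, and for each such $j'$ the row analysis propagates it to every $i'\ge i$; hence $c_{rs}\le 0$ for all $(r,s)\ge(i,j)$.

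The step I expect to be the real obstacle is the first one: controlling the associated primes of $S(X)$, so that a general $(1,0)$-form is genuinely a nonzerodivisor and $\overline S(X)$ is the clean cyclic quotient used throughout. This is exactly where the hypotheses "$I_X$ saturated" (with respect to the bigraded irrelevant ideal $(x_0,x_1)\cap(y_0,y_1)$) and "$\dim X=0$" are actually used — jointly they exclude embedded primes — and it deserves to be checked carefully, since everything afterwards is only the two-variable Macaulay bound plus bookkeeping on the partial sums of $\Delta M_X$.
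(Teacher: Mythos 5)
Your proposal is correct, but note that the paper does not prove this statement at all: Theorem~\ref{T0} is quoted verbatim from Giuffrida--Maggioni--Ragusa \cite[Theorem 2.11]{GMR} and used as a black box, so there is no in-paper argument to compare against. What you have written is a legitimate self-contained proof, and it is in the spirit of the original source: the whole theorem reduces to the observation that $b_{ij}=\dim_k\bigl(S(X)/xS(X)\bigr)_{i,j}$ for a general $x\in S_{1,0}$, that each row $\bigoplus_j(S(X)/xS(X))_{i,j}$ is a cyclic $k[y_0,y_1]$-module $k[y_0,y_1]/J_i$ with $J_i\ne 0$, and the elementary two-variable facts about such quotients (differences $\le 1$; once $\le 0$, always $\le 0$, via $\dim(y_0V+y_1V)\ge\dim V+1$ for $V\ne 0$, which one checks with leading monomials); the symmetric analysis in the other ruling plus the surjectivity $S_{1,0}\cdot S(X)_{i-1,j}=S(X)_{i,j}$ gives the rest, and combining row- and column-persistence yields condition (2) for arbitrary $(r,s)\ge(i,j)$. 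The one step you rightly flag as delicate — that a general $(1,0)$-form is a nonzerodivisor on $S(X)$ — does go through: since $I_X$ is saturated, no associated prime of $S(X)$ contains the irrelevant ideal, and any bihomogeneous prime strictly containing some $I_P$ must contain $(x_0,x_1)$ or $(y_0,y_1)$, so $\operatorname{Ass}(S(X))$ consists exactly of the $I_P$ with $P\in X$, each of which meets $S_{1,0}$ in a line; as $k$ is algebraically closed (hence infinite) a general form avoids all of them. The only cosmetic caveat is that the inequality $b_{ij}\le b_{i-1,j}$ in (3) should be read for $i\ge 1$ (as in \cite{GMR}), since your surjectivity argument, like the statement itself, is vacuous at $i=0$.
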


\begin{remark}  \label{rm}
If $X\subset Q$ is a zero-dimensional scheme, let us consider $a=\min\{i\in \mathbb
N\mid (I_X)_{{i,0}}\ne 0\}-1$ and $b=\min\{j\in \mathbb
N\mid (I_X)_{{0,j}}\ne 0\}-1$. Then by Theorem \ref{T0} $\Delta M_X$ is zero out of the
rectangle with opposite vertices $(0,0)$ and $(a,b)$. In this case we
say that $\Delta M_X$ is of size $(a,b)$.
\end{remark}

  Let $M_X=(m_{ij})$ be the Hilbert function of a zero-dimensional scheme
  $X\subset Q$. Using the notation in Giuffrida, Maggioni and Ragusa \cite{GMR}, for every $j\ge 0$
  and $i\ge 0$ we set respectively:
\begin{align*}
i(j)=&\min\{t\in \mathbb N\mid m_{tj}=m_{t+1j}\}=\min\{t\in \mathbb
N\mid b_{t+1j}=0\}\\
j(i)=&\min\{t\in \mathbb N\mid m_{it}=m_{it+1}\}=\min\{t\in \mathbb
N\mid a_{it+1}=0\}.
\end{align*}
In particular, we see that $i(0)=a$ and $j(0)=b$. 

For any zero-dimensional scheme $X\subset Q$ we have $1\le
\operatorname{depth} S(X)\le 2$.  $X$ is called \emph{arithmetically
Cohen-Macaulay} (ACM for short) if $\operatorname{depth} S(X)=2$,
in which case $S(X)$ is a Cohen-Macaulay ring.

Let $X\subset Q$ be a reduced ACM zero-dimensional scheme, let
$R_0$,\dots,$R_a$ and $C_0$,\dots,$C_b$ be, respectively, $(1,0)$ and
$(0,1)$-lines containing $X$ and each at least one point of $X$. Let
$P_{ij}=R_i\cap C_j$ for any $i,j$. After a suitable permutation of
$(1,0)$ and $(0,1)$-lines, a graphical
representation of $X$, inspired to the Ferrer's diagram
(see, for example, Marino \cite{M2} and Van Tuyl \cite{VT2}), is the
following:
\begin{figure}[H]
\begin{center}
\begin{tikzpicture}[scale=0.4,font=\fontsize{7}{7}]
  \draw[pattern=dots,step=6pt]
 (0,0) -- (3,0) -- (3,2) -- (4,2)-- (4,5) -- (7.5,5)
 -- (7.5,7.3) -- (8,7.3)  -- (8,8) -- (9,8) -- (9,9) -- (0,9) --
 (0,0);
\draw (-0.5,9) node {$R_0$};
\draw (0,9.5) node {$C_0$};
\draw (9,9.5) node {$C_b$};
\draw (-0.5,0) node {$R_a$};
\end{tikzpicture}
\end{center}
\caption{ACM scheme}\label{fig:ACM}
\end{figure}
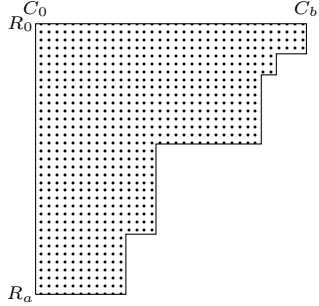

\begin{thm}[{\cite[Theorem 4.1]{GMR}}]   
\label{T:3}
A zero-dimensional scheme  $X\subset Q$ is ACM if and only if
$c_{ij}\ge 0$ for any $(i,j)$.
\end{thm}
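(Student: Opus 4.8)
The plan is to reduce the question to an Artinian quotient of $S(X)$. Since $X$ is zero-dimensional, $\dim S(X)=2$, and ``$X$ is ACM'' means exactly that $S(X)$ is Cohen--Macaulay. First I would record a fact valid for every zero-dimensional $X\subset Q$: a general form $\ell\in S_{1,0}$ is a nonzerodivisor on $S(X)$, because $x_0$ and $x_1$ have no common zero on $Q\supseteq X$, so no associated prime of $S(X)$ contains $(x_0,x_1)$ (in particular $H^0_{\mathfrak m}(S(X))\subseteq H^0_{(x_0,x_1)}(S(X))=0$, so $\mathfrak m\notin\operatorname{Ass}S(X)$); symmetrically a general $m\in S_{0,1}$ is a nonzerodivisor on $S(X)$. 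A B\'ezout-type computation --- intersecting the affine cone over $X$ with the ``coordinate hyperplanes'' $\ell=0$ and $m=0$ --- shows $(\ell,m)$ is a system of parameters for $S(X)$. Hence $A:=S(X)/\ell S(X)$ has $\dim_k A_{i,j}=m_{ij}-m_{i-1j}=b_{ij}$ for every $(i,j)$, and $X$ is ACM if and only if $m$ is again a nonzerodivisor on $A$, i.e. $H^0_{\mathfrak m}(A)=0$.

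The implication ``ACM $\Rightarrow c_{ij}\ge0$'' is then immediate: if $S(X)$ is Cohen--Macaulay, the system of parameters $\ell,m$ is a regular sequence, so passing from $A$ to $A/mA$ drops each Hilbert value by a clean difference and $\dim_k\bigl(S(X)/(\ell,m)S(X)\bigr)_{i,j}=b_{ij}-b_{ij-1}=c_{ij}$; being a dimension, this is $\ge0$. (As a by-product one finds $\sum_{i,j}c_{ij}=\dim_k S(X)/(\ell,m)S(X)=e\bigl((\ell,m),S(X)\bigr)=\deg X$, useful bookkeeping below.)

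The converse is the substantive direction and the one I expect to be hard. Assume $c_{ij}\ge0$ for all $(i,j)$; by Theorem \ref{T0} each $c_{ij}$ then lies in $\{0,1\}$ and $\{(i,j):c_{ij}=1\}$ is an order ideal of $\mathbb N^2$, so $M_X$ is numerically the Hilbert function of a reduced ACM ``Ferrers'' configuration of $\deg X$ points; the task is to promote this to the statement that $T:=H^0_{\mathfrak m}(A)$ vanishes. With $t_{ij}:=\dim_k T_{i,j}$ and $\overline A:=A/T$ one-dimensional Cohen--Macaulay (so $m$ is a nonzerodivisor on $\overline A$), the decomposition $c_{ij}=\dim_k(\overline A/m\overline A)_{i,j}+(t_{ij}-t_{ij-1})$ yields $t_{ij}-t_{ij-1}\le c_{ij}$, and the parallel argument beginning with a general $(0,1)$-form controls the analogous deficiency in the $i$-direction. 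Since $t$ has finite support and vanishes when $i<0$ or $j<0$, these inequalities already force the deficiency to vanish on the empty rows below and the empty columns to the right of the Ferrers region; the crux --- and the step I expect to cost the most --- is to propagate this vanishing into the Ferrers region itself. For that I would invoke the full numerical content of Theorem \ref{T0} (the concavity of $b_{ij}$ in each variable, not just $c_{ij}\ge0$) to control the Hilbert function of $\overline A$ box by box along the staircase. Once $T=0$, $m$ is a nonzerodivisor on $A$, so $\ell,m$ is a regular sequence on $S(X)$ and $S(X)$ is Cohen--Macaulay, i.e. $X$ is ACM.
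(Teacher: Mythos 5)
First, a point of reference: the paper contains no proof of Theorem \ref{T:3} at all --- it is quoted verbatim from Giuffrida--Maggioni--Ragusa \cite[Theorem 4.1]{GMR} --- so the benchmark is their proof, not anything in this paper. Your forward implication (ACM $\Rightarrow c_{ij}\ge 0$) is correct and standard: general forms $\ell\in S_{1,0}$ and $m\in S_{0,1}$ avoid the associated primes of $S(X)$ (which, $I_X$ being saturated, are the ideals of the points), they form a system of parameters, hence a regular sequence in the Cohen--Macaulay case, and $c_{ij}$ is then the Hilbert function of the Artinian reduction.

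The converse, which is the entire content of the theorem, is not proved. You say yourself that the crux is ``to propagate this vanishing into the Ferrers region,'' and nothing you have written does that. Concretely, the inequality $t_{ij}-t_{ij-1}\le c_{ij}$ together with the finite support of $t$ does not force $T=0$: if in row $i$ one has $c_{ij}=1$ for $j\le j_0$ and $c_{ij}=0$ for $j>j_0$, then $t_{ij}=1$ for $j_1\le j\le j_2$ with $j_1\le j_0$ is consistent with every one of these inequalities (the single $+1$ jump sits inside the Ferrers region, and the $-1$ drop is never obstructed). The symmetric reduction by a general $(0,1)$-form yields inequalities for a \emph{different} module $T'=H^0_{\mathfrak m}\bigl(S(X)/mS(X)\bigr)$, not for $T$, so the two families of constraints cannot simply be intersected. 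More fundamentally, no manipulation of $M_X$ alone can close the gap, because $\dim_k T_{i,j}$ is not a function of $M_X$: it is expressed through the Hilbert function of $\overline A=A/T$, the saturated generic hyperplane section, and controlling that object is precisely the geometric work that \cite{GMR} carry out. As it stands your proposal is a correct easy direction plus an honest restatement of the hard direction as an open step, not a proof.
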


Let $X$ be a reduced ACM
zero-dimensional scheme $X$ and $\Delta M_X=(c_{ij})$. By \cite[Proposition 4.1]{BM} we see that $c_{ij}=1$ if and only if $P_{ij}\in X$. 

\begin{defin}
The pair $(i,j)$ is called \emph{corner for $X$} if
$P_{i-1j},P_{ij-1}\in X$, but $P_{ij}\notin X$. The pair $(i,j)$ is called \emph{vertex for $X$}
if $P_{i-1j},P_{ij-1}\notin X$ and $P_{i-1j-1}\in X$. 
\end{defin}

We give the following definitions:

\begin{defin}
  \label{D:2}
  A point $P_{ij}\in X$ is called \emph{interior point} for $X$ if
  there exists a corner $P_{rs}\in X$ such that $(i,j)< (r,s)$. A
  point $P_{ij}\in X$ is called \emph{boundary point} for $X$ if
  there is no corner $P_{rs}\in X$ such that $(i,j)<(r,s)$.
\end{defin}

\begin{figure}[H]
\caption*{$P=\text{interior point}$, $Q=\text{boundary point}$,
  $S=$ corner, $T_1,T_2=$ vertices}
\begin{center}
\begin{tikzpicture}[scale=0.4,font=\fontsize{7}{7}]
  \draw[pattern=dots,step=6pt]
 (0,0) -- (3,0) -- (3,2) -- (4,2)-- (4,5) -- (7.5,5)
 -- (7.5,7.3) -- (8,7.3)  -- (8,8) -- (9,8) -- (9,9) -- (0,9) -- (0,0);
\draw[fill] 
 (4.2,4.8) circle (2.5pt)
 (4.2,1.8) circle (2.5pt)
(7.7,4.8) circle (2.5pt)
(3.6,3.9) circle (2.5pt)
(1.5,6.7) circle (2.5pt);
\draw (3,4.5) node [fill=white] {$Q$};
\draw (0.9,7.3) node [fill=white] {$P$};
\draw (-0.5,9) node {$R_0$};
\draw (0,9.5) node {$C_0$};
\draw (9,9.5) node {$C_b$};
\draw (-0.5,0) node {$R_a$};
\draw (4.7,1.5) node {$T_1$};
\draw (8.2,4.5) node {$T_2$};
\draw (4.6,4.5) node {$S$};
\end{tikzpicture}
\end{center}
\end{figure} 

\begin{remark}
  \label{R:6}
  After a permutation of $(1,0)$ and $(0,1)$-lines that preserves a
  configuration of points as in Figure~\ref{fig:ACM}, an interior
  point remains an interior point and a boundary point remains a
  boundary point. This can be shown as done in \cite{M2}, noting that
  the definition of interior point is related to the definition of gap
  given by Marino in \cite{M2}.
\end{remark}

Now we recall the following definition:
\begin{defin}
  Let $X\subset Q$ be a zero-dimensional scheme and let $P\in X$. The
  multiplicity of $X$ in $P$, denoted by $m_X(P)$, is the length of
  $\mathscr O_{X,P}$.
\end{defin}

Given $P\in Q$, we denote by $I_P$ the maximal ideal of $S$ associated
to $P$. If $X\subset Q$ is a $0$-dimensional scheme,  then
$I_X=\cap_{P'\in X} J_{P'}$ for some ideal $J_{P'}$ such that $\sqrt{J_{P'}}=I_{P'}$.

\begin{defin}
  Given a zero-dimensional scheme $X\subset Q$ and $P\in X$ such that
  $m_X(P)=1$, we say that $f\in S$ is a \emph{separator} for $P\in X$ if $f(P)\ne 0$ and
  $f\in \cap_{P'\in X\setminus \{P\}}J_{P'}$. We say that $f$ is a
  separator of \emph{minimal separating degree} for $P\in X$ if $f$ is
  a separator for $P\in X$ and if there are no
  separators $g$ for $P\in X$ such that $\deg g<\deg f$. $\deg f$ is
  called \emph{minimal separating degree for $P$}.
\end{defin}

This definition generalizes the definition of a separator (see
Orecchia \cite{O}) for a point
in a reduced zero-dimensional scheme in a multiprojective space given
by Guardo, Marino and Van Tuyl \cite{GMVT} and Guardo and Van Tuyl \cite{GVT3}.

\begin{lemma}
  \label{R:4}
Let $X$ be a zero-dimensional scheme and let $P\in X$ with
$m_X(P)=1$. Given $Z=X\setminus \{P\}$, $(r,s)$ is the unique minimal
separating degree for $P$ if and only if:
\[
H_Z(i,j)=
\begin{cases}
  H_X(i,j) & \text{for }(i,j)\ngeq (r,s)\\
  H_X(i,j)-1 & \text{for }(i,j)\ge (r,s).
\end{cases}
\]
\end{lemma}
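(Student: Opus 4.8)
The plan is to exploit the short exact sequence of sheaves relating $X$ and $Z = X \setminus \{P\}$. Since $m_X(P) = 1$, the point $P$ is a reduced point of $X$, and there is an exact sequence $0 \to \mathcal{I}_X \to \mathcal{I}_Z \to \mathcal{F} \to 0$, where $\mathcal{F}$ is a skyscraper sheaf supported at $P$ of length $1$. Twisting by $(i,j)$ and taking cohomology on $Q = \PP$, and then passing to the graded modules $H^0_*$, one gets for every $(i,j)$ an exact sequence
\[
0 \to (I_X)_{i,j} \to (I_Z)_{i,j} \to k \to (H^1 \mathcal{I}_X(i,j)) \to \cdots,
\]
so that $\dim_k (I_Z)_{i,j} - \dim_k (I_X)_{i,j} \in \{0,1\}$ for all $(i,j)$, hence $H_Z(i,j) - H_X(i,j) \in \{0, -1\}$, and the difference is governed by whether a separator of degree $(i,j)$ exists. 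More precisely, $H_Z(i,j) = H_X(i,j) - 1$ exactly when there is a form $f \in (I_Z)_{i,j} \setminus (I_X)_{i,j}$, i.e. a separator for $P \in X$ of degree $(i,j)$. So the entire statement reduces to the combinatorial claim: the set $D = \{(i,j) : \text{there is a separator for } P \in X \text{ of degree } (i,j)\}$ equals the up-set $\{(i,j) : (i,j) \ge (r,s)\}$ if and only if $(r,s)$ is the unique minimal element of $D$.

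First I would establish that $D$ is an up-set (an order filter): if $f$ is a separator of degree $(i,j)$ and $(i',j') \ge (i,j)$, then multiplying $f$ by a suitable monomial (a product of forms of degrees $(i'-i,0)$ and $(0,j'-j)$ not vanishing at $P$ — these exist since $k$ is infinite, being algebraically closed) produces a separator of degree $(i',j')$. This uses only that $f(P) \ne 0$ is preserved under such multiplication and that $f \in \cap_{P' \ne P} J_{P'}$ is preserved under multiplication by anything in $S$. With $D$ known to be an up-set, the forward direction is immediate: if $(r,s)$ is the unique minimal separating degree, then $D$ has $(r,s)$ as its unique minimal element, and since $D$ is an up-set this forces $D = \{(i,j) : (i,j) \ge (r,s)\}$; combined with the cohomology computation above this yields the displayed formula for $H_Z$. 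For the converse, if the displayed formula holds, then $D = \{(i,j) : (i,j)\ge(r,s)\}$, whose unique minimal element is $(r,s)$; hence $(r,s)$ is a separating degree, no strictly smaller degree is one, and any two minimal separating degrees would both be minimal elements of this up-set, so $(r,s)$ is the unique one. (Here "unique minimal separating degree" must be read in the poset sense: not merely that the minimal separating \emph{total degree} is achieved uniquely, but that $D$ has a single minimal element under $\le$ on $\mathbb{N}^2$.)

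The main obstacle is the cohomological bookkeeping needed to justify that $\dim_k(I_Z)_{i,j} = \dim_k(I_X)_{i,j} + 1$ whenever a separator of degree $(i,j)$ exists, and $= \dim_k(I_X)_{i,j}$ otherwise — equivalently, that the connecting map $(I_Z)_{i,j} \to k$ in the cohomology sequence is surjective precisely in degrees of $D$ and that no further cancellation hides in the $H^1$ terms. The cleanest route is probably to avoid sheaf cohomology on a product and argue directly with the saturated ideals: $(I_Z)_{i,j}/(I_X)_{i,j}$ injects into $(\mathcal{O}_{X,P}/\mathfrak{m}_P^?)$-type data, and since $m_X(P)=1$ this quotient is at most one-dimensional, spanned by the class of any separator of that degree when one exists. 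I would also want to note explicitly that $I_Z$ is saturated (so that $H_Z$ really is the Hilbert function appearing in the statement) — this follows because $Z$ is again a zero-dimensional scheme, so its saturated ideal is $\cap_{P' \in Z} J_{P'}$, which is exactly what we are computing. Once these two points are pinned down, the rest is the elementary up-set argument sketched above.
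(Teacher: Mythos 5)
Your proposal is correct and takes essentially the same approach as the paper, whose entire proof is the one-line invocation of the exact sequence $0\rightarrow \mathscr I_X\rightarrow \mathscr I_Z\rightarrow \mathscr O_P\rightarrow 0$. You have simply supplied the bookkeeping the paper leaves implicit: that $(I_Z)_{i,j}/(I_X)_{i,j}$ is at most one-dimensional and is nonzero exactly when a separator of degree $(i,j)$ exists, together with the observation that the set of separating degrees is an up-set in $\mathbb N^2$.
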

\begin{proof}
  This follows by the exact sequence $0\rightarrow \mathscr I_X\rightarrow \mathscr I_Z\rightarrow \mathscr
O_P\rightarrow 0$.
\end{proof}

As a consequence of the previous lemma we see that, if $P$ has just
one minimal separating degree $(r,s)$ for $X$, then a separator of minimal
degree is unique modulo $(I_X)_{r,s}$ up to a scalar.

\begin{remark}
  \label{R:7}
  Let $X\subset Q$ be a reduced ACM zero-dimensional scheme and let $P\in
  X$. Suppose that $P=R\cap C$, with $R$ and $C$, respectively,
  $(1,0)$ and $(0,1)$-lines and let $p+1=\deg (X\cap R)$ and
  $q+1=\deg (X\cap C)$. Then by \cite[Proposition 7.4]{M} we see that
  $(q,p)$ is the unique minimal separating degree for $P$ in
  $X$. 
\end{remark}

\begin{defin}
  \label{D:3}
  Given a zero-dimensional scheme $X$, a pair $(i,j)$ is called
  \emph{corner} for $\Delta M_X=(c_{ij})$ if $c_{ij}\le 0$ and
  $c_{ij_1}=c_{i-1j}=1$. A pair $(i,j)$ is called \emph{vertex} for
  $\Delta M_X$ if $c_{i-1j}=c_{ij_1}\le 0$ and $c_{i-1j-1}=1$. 
\end{defin}

If $X$ is an ACM zero-dimensional scheme, by
  the fact that $c_{ij}=1$ if and only if $P_{ij}\in X$ it follows
  that $(i,j)$ is a vertex (resp. corner) for $\Delta M_X$ if and only
  if $P_{ij}$ is a vertex (resp. corner) for $X$. So:
  \begin{enumerate}
  \item if $P$ is a boundary point, then $(q+1,p+1)$ is a vertex for
    $\Delta M_X$ and $c_{qp}=1$;
    \item if $P$ is an interior point, then $(q+1,p+1)$ is not a vertex
      for $\Delta M_X$ and $c_{qp}=0$.
  \end{enumerate}
So by Lemma \ref{R:4} and by Theorem \ref{T:3} it follows that the scheme $Z=X\setminus \{P\}$
is ACM if and only if $P$ is a boundary point.

\section{Minimal free resolutions of zero-dimensional schemes}

Given a zero-dimensional scheme $X\subset Q$, we know that $1\le
\operatorname{depth} S(X)\le 2$, so that $S(X)$ has a minimal
free resolution of length $\le 3$. $S(X)$ has a minimal free
resolution of length $2$ when $X$ is ACM. In Giuffrida, Maggioni and
Ragusa \cite[Example 3.1]{GMR} there is a first example of a
zero-dimensional scheme in $Q$ that is not ACM. So we see that a 
minimal free resolution of a zero-dimensional scheme in $Q$ is of the
following type:
\begin{multline}  \label{m}
  0 \rightarrow \bigoplus_{i=1}^t \mathscr
  O_Q(-a_{3i},-a'_{3i})\rightarrow  \bigoplus_{i=1}^n \mathscr
  O_Q(-a_{2i},-a'_{2i})\stackrel{\varphi}{\rightarrow} \\
\stackrel{\varphi}{\rightarrow} \bigoplus_{i=1}^m \mathscr
  O_Q(-a_{1i},-a'_{1i})\rightarrow \mathscr I_X\rightarrow 0
\end{multline}
and $X$ is ACM if $t=0$. In particular, Giuffrida, Maggioni and Ragusa
in \cite[Theorem 4.1]{GMR} show that, if $X$ is ACM, then
$(a_{1i},a'_{1i})$ run over all the corners of $X$ and
$(a_{2i},a'_{2i})$ run over all the vertices of $X$.

It is possible to have an exact sequence of length $2$ that is not a
resolution. Indeed, as shown in Giuffrida, Maggioni and Ragusa \cite[Remark 3.2]{GMR}, we can have an
exact sequence of type:
\[
0\rightarrow \mathscr O_Q(-r_1-r_2,-s_1-s_2)\rightarrow \mathscr
O_Q(-r_1,-s_1)\oplus \mathscr O_Q(-r_2,-s_2)\rightarrow \mathscr
I_X\rightarrow 0
\]
where $X$ is a the intersection of two curves in $Q$, but it is not a
complete intersection. In fact in Giuffrida, Maggioni and Ragusa \cite[Theorem 1.2]{GMR} we see that
the only complete intersections in $Q$ are obtained by intersecting
two curves of type $(a,0)$ and $(0,b)$. From a cohomological point of
view the problem is that $h^1\mathscr O_Q(i,j)$ can be nonzero. So the
exact sequence \eqref{m} is a resolution if and only if the following conditions
hold for any $(r,s)$:
\\[1ex]
\begin{itemize}
\item $H^0 \bigoplus_{i=1}^m \mathscr
  O_Q(r-a_{1i},s-a'_{1i})\rightarrow H^0\mathscr I_X(r,s)$ is
  surjective\\[1ex]
\item $H^0\bigoplus_{i=1}^n \mathscr
  O_Q(r-a_{2i},s-a'_{2i})\rightarrow H^0\mathscr F(r,s)$ is
  surjective, where $\mathscr F=\operatorname{Im} \varphi$. 
\\[1ex]
\end{itemize}
In this way the sequence:
\begin{multline*}  
  0 \rightarrow H^0\bigoplus_{i=1}^t \mathscr
  O_Q(r-a_{3i},s-a'_{3i})\rightarrow  H^0\bigoplus_{i=1}^n \mathscr
  O_Q(r-a_{2i},s-a'_{2i})\stackrel{\varphi}{\rightarrow} \\
\stackrel{\varphi}{\rightarrow} H^0\bigoplus_{i=1}^m \mathscr
  O_Q(r-a_{1i},s-a'_{1i})\rightarrow H^0\mathscr I_X(r,s)\rightarrow 0
\end{multline*}
is exact for any $(r,s)$. In this section we compute the minimal free resolution of some
particular zero-dimensional schemes in $\mathbb P^1\times \mathbb
P^1$. 

Given a zero-dimensional scheme $X\subset Q$, let $R_0$,\dots,$R_a$
and $C_0$,\dots,$C_b$ be, respectively, the $(1,0)$ and $(0,1)$-lines
containing $X$ and each at least one point of $X$. Let
$P_{i,j}=R_i\cap C_j$ for any $i$, $j$.

\begin{lemma}
  \label{L:2}
  Let $X$ be an ACM zero-dimensional scheme  and let $P_{hk}\in X$ be an
  interior point. Let $q+1=\#(X\cap C_k)$ and $p+1=\#(X\cap R_h)$. Given
  the minimal free resolution of $X$:
\[
  0 \rightarrow  \bigoplus_{i=1}^{m-1} \mathscr
  O_Q(-a_{2i},-a'_{2i})\stackrel{\varphi}{\rightarrow} \\
\stackrel{\varphi}{\rightarrow} \bigoplus_{i=1}^m \mathscr
  O_Q(-a_{1i},-a'_{1i})\rightarrow \mathscr I_X\rightarrow 0,
\]
then $(q,p)\nleq (a_{1i},a'_{1i})$ for any $i=,\dots,m$ and $(q+1,p+1)\ne
(a_{2i},a'_{2i})$ for any $i=1,\dots,m-1$.
\end{lemma}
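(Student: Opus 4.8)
The plan is to turn both assertions into statements about the corners and vertices of $\Delta M_X$. Since $P_{hk}=R_h\cap C_k$ with $\#(X\cap R_h)=p+1$ and $\#(X\cap C_k)=q+1$, Remark~\ref{R:7} identifies $(q,p)$ as the unique minimal separating degree for $P_{hk}$ in $X$; and, $P_{hk}$ being an interior point, the discussion following Definition~\ref{D:3} gives both $c_{qp}=0$ and that $(q+1,p+1)$ is not a vertex for $\Delta M_X$. As $X$ is ACM, \cite[Theorem~4.1]{GMR}, together with the fact that for an ACM scheme the corners (resp.\ vertices) of $X$ coincide with those of $\Delta M_X$, shows that the $(a_{1i},a'_{1i})$ are exactly the corners of $\Delta M_X$ and the $(a_{2i},a'_{2i})$ exactly its vertices. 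Hence the first assertion says that no corner $(r,s)$ of $\Delta M_X$ satisfies $(q,p)\le(r,s)$, and the second says that $(q+1,p+1)$ is not a vertex of $\Delta M_X$.

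With this dictionary the second assertion is immediate. For the first, recall that $X$ ACM forces $c_{ij}\ge 0$ for all $(i,j)$ by Theorem~\ref{T:3}, so $c_{qp}=0$ together with Theorem~\ref{T0}(2) forces $c_{ij}=0$ for every $(i,j)\ge(q,p)$. Suppose, for contradiction, that $(q,p)\le(r,s)$ for some corner $(r,s)=(a_{1i},a'_{1i})$ of $\Delta M_X$; by Definition~\ref{D:3}, $c_{r-1,s}=c_{r,s-1}=1$. If $r>q$ then $(r-1,s)\ge(q,p)$, so $c_{r-1,s}=0$, a contradiction; the inequality $s>p$ is excluded the same way. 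Therefore $(r,s)=(q,p)$, and the whole problem reduces to showing that $(q,p)$ is not itself a corner of $\Delta M_X$, i.e.\ that $c_{q-1,p}$ and $c_{q,p-1}$ are not both equal to $1$.

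This reduction is routine; the residual step is the one I expect to be the real obstacle, and it is exactly the step in which one must use that $P_{hk}$ is interior, not merely a point of $X$. One first records what $P_{hk}\in X$ already gives: $c_{hk}=1$, and since the points of $X$ on $R_h$ fill exactly $C_0,\dots,C_p$ while those on $C_k$ fill exactly $R_0,\dots,R_q$, we get $h\le q$, $k\le p$; and $h=q$ would give $c_{qp}=c_{hp}=1$ while $k=p$ would give $c_{qp}=c_{qk}=1$, so in fact $h<q$ and $k<p$. Next one invokes the interior hypothesis: it supplies a corner $(r_0,s_0)$ of $\Delta M_X$ with $h<r_0$, $k<s_0$, and the staircase (lower-set) shape of an ACM scheme then forces $r_0\le q$, $s_0\le p$. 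If $(r_0,s_0)\neq(q,p)$ one is done, because $c_{r_0,s_0}=0$ propagates by Theorem~\ref{T0}(2) to kill $c_{q-1,p}$ or $c_{q,p-1}$; the hard case, on which I would spend most of the effort, is $(r_0,s_0)=(q,p)$, where one must deduce from $h<q$, $k<p$, $P_{hk}\in X$ and the exact counts $p+1=\#(X\cap R_h)$, $q+1=\#(X\cap C_k)$ that $c_{q-1,p}$ and $c_{q,p-1}$ cannot both equal $1$ — equivalently, that a reentrant corner witnessing the interiority of $P_{hk}$ is never the separating degree of $P_{hk}$ itself. That is the technical heart of the lemma, and settling it closes the argument.
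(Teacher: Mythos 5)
Your translation of both claims into statements about corners and vertices of $\Delta M_X$, and your use of $c_{qp}=0$ together with Theorem~\ref{T0}(2) to kill every corner $(r,s)$ with $r>q$ or $s>p$, is sound and is in fact more explicit than the paper's own proof (which only treats the vertex claim, by observing that an interior point lies under two vertices of $\Delta M_X$). The problem is the residual step you isolate and leave open: showing that $(q,p)$ itself is never a corner of $\Delta M_X$. Besides being the one step you do not carry out, it is false. Take the staircase $X=\{P_{ij}\mid i\le 1,\ j\le 2\}\cup\{P_{20},P_{21}\}$ and $P_{hk}=P_{00}$: then $(2,2)$ is a corner for $X$ dominating $(0,0)$, so $P_{00}$ is interior, and $q=p=2$; yet $c_{12}=c_{21}=1$ and $c_{22}=0$, so $(q,p)=(2,2)$ is a corner of $\Delta M_X$ and hence a generator degree by \cite[Theorem 4.1]{GMR}, giving $(q,p)\le(a_{1i},a'_{1i})$ for that $i$. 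The paper's own Example~\ref{E:1} exhibits the same phenomenon: for $P_{40}$ one has $(q,p)=(5,2)$, which is a generator degree of $X$, and correspondingly $\mathscr O_Q(-5,-2)^{\oplus 2}$ appears in the resolution of $Z$. So the first assertion of Lemma~\ref{L:2} is false as literally written with $\nleq$; the statement that is actually needed and used in Lemma~\ref{L:1} is the strict one, $(q,p)\nless(a_{1i},a'_{1i})$.

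Once you aim at the strict version, your argument closes without the residual step: if $q<r$ and $p<s$ then $(r-1,s)\ge(q,p)$ and $(r,s-1)\ge(q,p)$, so $c_{r-1,s}=c_{r,s-1}=0$ and $(r,s)$ cannot be a corner. You should therefore delete the attempted exclusion of $(r,s)=(q,p)$ (which cannot be salvaged) and prove the $\nless$ form, flagging the discrepancy with the printed statement. Your treatment of the second assertion, via the observation after Definition~\ref{D:3} that $(q+1,p+1)$ is not a vertex for an interior point, is correct and arguably cleaner than the paper's route.
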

\begin{proof}
  If $P_{hk}$ is an interior point, then there exist at least two
  vertices $(r_1,s_1)$ and $(r_2,s_2)$ for $\Delta M_X$ such that
  $(h,k)<(r_1,s_1)$ and $(h,k)<(r_2,s_2)$. This means that $(q,p)\ge
  (a_{2i},a'_{2i})$ for some $i$, so  that
  $(q+1,p+1)\ne (a_{2i},a'_{2i})$ for any $i=1,\dots,m-1$.
\end{proof}

\begin{lemma}
  \label{L:1}
  Let $Y$ be a zero-dimensional scheme and let:
\[
  0 \rightarrow \bigoplus_{i=1}^t \mathscr
  O_Q(-a_{3i},-a'_{3i})\rightarrow  \bigoplus_{i=1}^n \mathscr
  O_Q(-a_{2i},-a'_{2i})\rightarrow \bigoplus_{i=1}^m \mathscr
  O_Q(-a_{1i},-a'_{1i})\rightarrow \mathscr I_Y\rightarrow 0
\]
be the minimal free resolution of $Y$. Let $P\in Y$ with $m_{Y}(P)=1$ and let
  $Z=Y\setminus \{P\}$. Suppose that there exist $r,s\in \mathbb N$
  such that:
  \begin{enumerate}
    \item $P$ has just one minimal separating degree $(r,s)$;
\item $(r,s)\nless (a_{1i},a'_{1i})$ for every $i=1,\dots,m$;
  \item $(r+1,s+1)\ne (a_{2i},a'_{2i})$ for every $i=1,\dots,n$.
  \end{enumerate}
Then the minimal free
resolution of $Z$ is:
\begin{multline}
  \label{eq:11}
  0 \rightarrow \bigoplus_{i=1}^t \mathscr
  O_Q(-a_{3i},-a'_{3i})\oplus \mathscr O_Q(-r-1,-s-1)\rightarrow \\ 
\rightarrow \bigoplus_{i=1}^n \mathscr
  O_Q(-a_{2i},-a'_{2i})\oplus \mathscr O_Q(-r-1,-s)\oplus \mathscr O_Q(-r,-s-1)\rightarrow \\
 \rightarrow \bigoplus_{i=1}^m \mathscr
  O_Q(-a_{1i},-a'_{1i})\oplus \mathscr O_Q(-r,-s)\rightarrow \mathscr I_Z\rightarrow 0.
\end{multline}
\end{lemma}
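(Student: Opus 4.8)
The plan is to build the resolution of $\mathscr I_Z$ by a mapping-cone argument on the short exact sequence $0\to \mathscr I_Y\to \mathscr I_Z\to \mathscr O_P\to 0$, and then to check that the resulting complex is \emph{minimal} using hypotheses (2) and (3). First I would observe that, since $P$ has a unique minimal separating degree $(r,s)$, Lemma~\ref{R:4} gives $H_Z=H_Y$ outside the cone $(r,s)+\mathbb N^2$ and $H_Z=H_Y-1$ on it; equivalently $\mathscr I_Z/\mathscr I_Y\cong \mathscr O_P$ with the generator of $(I_Z)_{r,s}/(I_Y)_{r,s}$ a minimal separator $f$ of degree $(r,s)$. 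The ideal generated by the image of $\mathscr O_Q(-r,-s)$ (the line bundle on the separator $f$) together with $\mathscr I_Y$ has a Koszul-type resolution of the quotient $S/(I_Y:f)$; here $(I_Y:f)$ is the ideal of the subscheme cut out on $P$, which is the complete intersection of a $(1,0)$-line and a $(0,1)$-line through $P$, hence has Koszul resolution $0\to\mathscr O_Q(-r-1,-s-1)\to\mathscr O_Q(-r-1,-s)\oplus\mathscr O_Q(-r,-s-1)\to\mathscr O_Q(-r,-s)\to\mathscr O_P\to 0$. Splicing this against the given resolution of $\mathscr I_Y$ via the map induced by multiplication by $f$ and taking the mapping cone produces exactly the complex~\eqref{eq:11}.

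Second I would verify exactness of the mapping cone. This is the part where the multigraded subtlety of $Q$ enters, since $h^1\mathscr O_Q(i,j)$ may be nonzero and an exact sequence of sheaves need not stay exact on global sections. The clean way is to argue on Hilbert functions: the alternating sum of the ranks of the (global sections of the twists of the) terms in~\eqref{eq:11}, evaluated at any $(u,v)$, equals $H_Y(u,v)$ contributions minus the Koszul contribution, which by the resolution of $\mathscr O_P$ is exactly $H_Z(u,v)$ as dictated by Lemma~\ref{R:4}. So the cone is a complex of the right Euler characteristic; combined with the fact that it is a complex of free modules that is right-exact onto $\mathscr I_Z$ and exact away from finitely many degrees (from the resolution of $\mathscr I_Y$), one deduces exactness everywhere. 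Concretely I would check the two bulleted surjectivity conditions stated in the text before Lemma~\ref{L:1}: the first holds because a minimal generating set of $I_Z$ consists of a minimal generating set of $I_Y$ together with $f$ (nothing in $(I_Y)_{<(r,s)}$ is lost, and $f$ supplies the one new dimension appearing at $(r,s)$), and the second holds by the corresponding statement one homological degree up, where the Koszul syzygies $(r+1,s),(r,s+1)$ of $f$ are the only new syzygies.

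Third, and this is where I expect the real work, I would prove the cone is \emph{minimal}, i.e. that no twist $\mathscr O_Q(-r,-s)$, $\mathscr O_Q(-r-1,-s)$, $\mathscr O_Q(-r,-s-1)$, $\mathscr O_Q(-r-1,-s-1)$ cancels against a neighboring term. In a mapping cone the only possible non-minimality is a unit entry in the comparison map between the two resolutions; such a unit would force a repeated twist in \emph{comparable} degrees across consecutive homological steps. Hypothesis~(2), $(r,s)\nless(a_{1i},a'_{1i})$, prevents the new generator $\mathscr O_Q(-r,-s)$ from being hit by a unit from any $\mathscr O_Q(-a_{2i},-a'_{2i})$-to-$\mathscr O_Q(-a_{1i},-a'_{1i})$ comparison and, dually, keeps $\mathscr O_Q(-r-1,-s)\oplus\mathscr O_Q(-r,-s-1)$ from collapsing into the $\mathscr O_Q(-a_{1i},-a'_{1i})$ summands; hypothesis~(3), $(r+1,s+1)\ne(a_{2i},a'_{2i})$, is exactly what stops $\mathscr O_Q(-r-1,-s-1)$ from cancelling against an existing second-syzygy summand. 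I would make this precise by examining the comparison map degree by degree: an entry from a summand of internal degree $(c,d)$ to one of internal degree $(c',d')$ is a form of degree $(c-c',d-d')$, hence a nonzero scalar only if $(c,d)=(c',d')$, and the three hypotheses rule out every such coincidence involving the four new summands. The main obstacle is thus bookkeeping: carefully listing all four new free summands, the maps among them (the Koszul differentials, which are themselves minimal since $(r,s)$, $(r+1,s)$, $(r,s+1)$, $(r+1,s+1)$ are pairwise distinct) and the cross-maps to the old resolution, and invoking (2)–(3) to kill each potential cancellation; once that is organized, minimality — and hence the identification of~\eqref{eq:11} as \emph{the} minimal free resolution of $Z$ — follows.
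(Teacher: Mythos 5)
Your proposal is correct and follows essentially the same route as the paper: a mapping-cone/horseshoe construction that splices the given resolution of $\mathscr I_Y$ with the Koszul resolution of the point $P$ twisted by the separator degree $(-r,-s)$, with exactness controlled by Lemma~\ref{R:4} and minimality extracted from hypotheses (2) and (3) exactly as in the text (the paper merely anchors the cone on $0\rightarrow \mathscr I_P(-r,-s)\rightarrow \mathscr I_Y\rightarrow \mathscr I_{Z|F}\rightarrow 0$ rather than on $0\rightarrow \mathscr I_Y\rightarrow \mathscr I_Z\rightarrow \mathscr I_Z/\mathscr I_Y\rightarrow 0$, which is the same computation reorganized). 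One clause of your minimality discussion is slightly off --- in the cone the comparison map only goes from the Koszul factor to the resolution of $\mathscr I_Y$, so $\mathscr O_Q(-r,-s)$ cannot be ``hit'' by any $\mathscr O_Q(-a_{2i},-a'_{2i})$ --- but this does not affect the argument, since the cancellations that do need ruling out are precisely the ones you dispose of with (2) and (3).
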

\begin{proof}
  By hypothesis $P$ has just one minimal separating degree for $Y$, i.e. there exists
  just one curve containing $Z$ and not $Y$, and this separator
  is a $(r,s)$-curve $F$. So we get the exact sequence:
\[
0\rightarrow \mathscr I_P(-r,-s)\rightarrow \mathscr I_Y\rightarrow
\mathscr I_{Z|F}\rightarrow 0,
\]
with $\mathscr I_{Z|F}$ ideal sheaf of $Z$ in $\mathscr O_F$, and by hypothesis and by Lemma \ref{R:4} we see that:
\begin{enumerate}  
\item[(i)] \label{e:1}  the map $H^0\mathscr I_Y(i,j)\rightarrow H^0\mathscr
I_{Z|F}(i,j)$ is surjective for any $(i,j)$;   
\item[(ii)] \label{e:2}  the minimal generators of $Y$ are minimal generators of $Z$,
  because $(r,s)\nless (a_{1i},a'_{1i})$ for every $i$.
\end{enumerate}
Let us consider the
mapping cone on this sequence:
\[
\xymatrix@C=0.8pc{
& & 0 \ar[d] & \\
& 0 \ar[d] & \bigoplus_{i=1}^t\mathscr O_Q(-a_{3i},-a'_{3i}) \ar[d] & \\
& \mathscr O_Q(-r-1,-s-1) \ar[d] & \bigoplus_{i=1}^{n} \mathscr
O_Q(-a_{2i},-a'_{2i}) \ar[d] &\\
& \mathscr O_Q(-r-1,-s)\oplus \mathscr O_Q(-r,-s-1) \ar[d] & \bigoplus_{i=1}^{m} \mathscr
O_Q(-a_{1i},-a'_{1i}) \ar[d] &\\
0 \ar[r] & \mathscr I_{P}(-r,-s) \ar[d] \ar[r] & \mathscr
I_Y \ar[d] \ar[r] & \mathscr I_{Z|F} \ar[r] & 0\\
& 0 & 0 &
}
\]
and we get the exact sequence:
\begin{multline*}
   0 \rightarrow \bigoplus_{i=1}^t \mathscr
  O_Q(-a_{3i},-a'_{3i})\oplus \mathscr O_Q(-r-1,-s-1)\rightarrow \\ 
\rightarrow \bigoplus_{i=1}^n \mathscr
  O_Q(-a_{2i},-a'_{2i})\oplus \mathscr O_Q(-r-1,-s)\oplus \mathscr O_Q(-r,-s-1)\rightarrow \\
 \rightarrow \bigoplus_{i=1}^m \mathscr
  O_Q(-a_{1i},-a'_{1i})\rightarrow \mathscr I_{Z|F}\rightarrow 0.
\end{multline*}
Since $\deg F=(r,s)$, we get the exact sequence \eqref{eq:11}. By (i)
and (ii) we see that the minimal generators of $Z$
have degrees $(a_{1i},a'_{1i})$ for $i=1,\dots,m$ and
$(r,s)$. By hypothesis and by Lemma \ref{R:4} we see also that the degrees of the first syzygies of
$I_Z$ are among $(r+1,s)$, $(r,s+1)$ and $(a_{2i},a'_{2i})$ for
$i=1,\dots,n$. By the fact that $(r+1,s+1)\ne (a_{2i},a'_{2i})$ for
every $i$, $(r+1,s+1)$ does not cancel out with any $(a_{2i},a'_{2i})$
and by the construction of the mapping cone $(a_{3i},a'_{3i})$  does
not cancel out any of the $(a_{2i},a'_{2i})$. If $(a_{3i},a'_{3i})$ 
cancels out either $(r+1,s)$ or $(r,s+1)$ for some $i$, then
$(r+1,s+1)>(a_{3i},a'_{3i})$ and some second syzygies regarding the
generators of $Y$ disappear. So we deduce that the sequence \eqref{eq:11} is the minimal free
resolution of $\mathscr I_Z$.
\end{proof}

\begin{cor}
  \label{C:2}
  Let $X\subset Q$ be an ACM zero-dimensional scheme and let $P=R\cap C\in X$
  be an interior point such that $m_X(P)=1$, with
  $R$ and $C$ $(1,0)$ and $(0,1)$-lines, respectively. Let:
\[
0\rightarrow \bigoplus_{i=1}^{m-1} \mathscr O_Q(-a_{2i},-a'_{2i})
\rightarrow \bigoplus_{i=1}^m \mathscr O_Q(-a_{1i},-a'_{1i}) \rightarrow
\mathscr I_{X}\rightarrow 0
\]
 be the minimal free resolution of $X$. Then the minimal free
 resolution of $Z=X\setminus \{P\}$ is:
 \begin{multline*}
0\rightarrow \mathscr O_Q(-q-1,-p-1)\rightarrow\\
\rightarrow \bigoplus_{i=1}^{m-1} \mathscr O_Q(-a_{2i},-a'_{2i})\oplus \mathscr
O_Q(-q,-p-1) \oplus \mathscr O_Q(-q-1,-p)
\rightarrow\\ 
\rightarrow \bigoplus_{i=1}^m \mathscr O_Q(-a_{1i},-a'_{1i})\oplus
\mathscr O_Q(-q,-p) \rightarrow
\mathscr I_{Z}\rightarrow 0,
\end{multline*} 
with $q+1=\#(X\cap C)$ and $p+1=\#(X\cap R)$.
\end{cor}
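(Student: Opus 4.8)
The plan is to obtain the statement as a direct application of Lemma~\ref{L:1} to $Y = X$, with the pair $(r,s)$ in that lemma chosen to be $(q,p)$. Since $X$ is ACM its minimal free resolution has length two, so in the notation of Lemma~\ref{L:1} one has $t = 0$ and $n = m-1$; this already matches the shape of the resolution written in the statement, and it remains only to check hypotheses (1)--(3) of that lemma and to read off \eqref{eq:11}.

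\emph{Step 1: the separating degree.} Because $P = R\cap C$ and $m_X(P) = 1$, Remark~\ref{R:7} applies and shows that $P$ has a unique minimal separating degree, namely $(q,p)$ with $q+1 = \#(X\cap C)$ and $p+1 = \#(X\cap R)$. This is exactly hypothesis~(1) of Lemma~\ref{L:1} with $(r,s) = (q,p)$.

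\emph{Step 2: the numerical conditions.} Since $P$ is an interior point of the ACM scheme $X$, Lemma~\ref{L:2} gives $(q,p) \nleq (a_{1i},a'_{1i})$ for every $i = 1,\dots,m$ and $(q+1,p+1) \ne (a_{2i},a'_{2i})$ for every $i = 1,\dots,m-1$. The first assertion is a fortiori stronger than $(q,p) \nless (a_{1i},a'_{1i})$, which is hypothesis~(2); the second is precisely hypothesis~(3) (with $n = m-1$). Hence all hypotheses of Lemma~\ref{L:1} are met, with $t=0$.

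\emph{Step 3: conclusion.} Applying Lemma~\ref{L:1} with $t = 0$, $n = m-1$ and $(r,s) = (q,p)$, the resolution \eqref{eq:11} specializes term by term: the last free module becomes $\mathscr O_Q(-q-1,-p-1)$, the syzygy module acquires the summands $\mathscr O_Q(-q-1,-p)$ and $\mathscr O_Q(-q,-p-1)$, and the module of generators acquires $\mathscr O_Q(-q,-p)$. This is exactly the minimal free resolution of $Z = X\setminus\{P\}$ asserted in the statement, minimality being part of the conclusion of Lemma~\ref{L:1}. The only point needing attention is that ``interior'' together with $m_X(P) = 1$ genuinely forces conditions (1)--(3) — but this is precisely what Remark~\ref{R:7} and Lemma~\ref{L:2} supply, so beyond assembling these inputs there is no real obstacle.
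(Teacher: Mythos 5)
Your proposal is correct and follows essentially the same route as the paper: the paper likewise invokes Marino's result (the content of Remark~\ref{R:7}) for the unique minimal separating degree $(q,p)$, uses Lemma~\ref{L:2} for the numerical hypotheses, and concludes via the mapping-cone argument of Lemma~\ref{L:1}. The only item the paper's proof records that you omit is that the minimal separator splits into a union of $(1,0)$ and $(0,1)$-lines, which is not needed for the resolution itself but is used later in Theorem~\ref{T:1}.
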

\begin{proof}
By Marino \cite[Proposition 7.4]{M} we see that there exists just one
minimal separating degree $(q,p)$ and that  a separator for $P$ of $X$ of minimal degree $(q,p)$ splits
into the union of linear forms. So we can proceed as in Lemma
\ref{L:1} and then the conclusion follows by Lemma \ref{L:2}. 
\end{proof}

Now we prove the following:
\begin{thm}  \label{T:2}
Let $X$ be a reduced ACM zero-dimensional scheme and let
$P_{i_1j_1}$,\dots, $P_{i_hj_h}\in X$ be interior points such that $i_1\ne 
\dots \ne i_h$ and $j_1\ne 
\dots \ne j_h$. Let us consider:
\[
Z=X\setminus \{P_{i_1j_1},\dots,P_{i_hj_h}\}.
\]
If the minimal free resolution of $X$ is:
\[
0\rightarrow \bigoplus_{i=1}^{m-1} \mathscr O_Q(-a_{2i},-a'_{2i})
\rightarrow \bigoplus_{i=1}^m \mathscr O_Q(-a_{1i},-a'_{1i}) \rightarrow
\mathscr I_X\rightarrow 0,
\]
and $q_l+1=\#(X\cap C_l)$ and $p_l+1=\#(X\cap R_l)$ for
$l=1,\dots,h$, then 
the minimal free resolution of $Z$ is:
\begin{multline*}
0\rightarrow \bigoplus_{l=1}^h\mathscr O_Q(-q_l-1,-p_l-1)\rightarrow\\
\rightarrow \bigoplus_{i=1}^{m-1} \mathscr O_Q(-a_{2i},-a'_{2i}) 
\oplus \bigoplus_{l=1}^h\mathscr
O_Q(-q_l,-p_l-1) \oplus  \bigoplus_{l=1}^h\mathscr
O_Q(-q_l-1,-p_l) \rightarrow\\ 
\rightarrow \bigoplus_{i=1}^m \mathscr O_Q(-a_{1i},-a'_{1i}) \oplus
 \bigoplus_{l=1}^h \mathscr O_Q(-q_l,-p_l) \rightarrow
\mathscr I_Z\rightarrow 0.
\end{multline*}
\end{thm}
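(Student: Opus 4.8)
The plan is to prove the statement by induction on $h$, deleting the points one at a time and invoking Lemma \ref{L:1} at each step; the case $h=1$ is exactly Corollary \ref{C:2}. First I would reorder the points so that $q_1\le q_2\le\cdots\le q_h$ and, within each block of indices sharing a common value of $q_l$, also $p_1\le p_2\le\cdots$; this reordering does not change $Z$. Set $Z_0=X$ and $Z_l=Z_{l-1}\setminus\{P_{i_lj_l}\}$, so $Z_h=Z$. The inductive hypothesis is that $Z_{l-1}$ has the resolution obtained from that of $X$ by adjoining, for each $k<l$, the summand $\mathscr O_Q(-q_k,-p_k)$ in homological degree $0$, the summands $\mathscr O_Q(-q_k,-p_k-1)\oplus\mathscr O_Q(-q_k-1,-p_k)$ in homological degree $1$, and $\mathscr O_Q(-q_k-1,-p_k-1)$ in homological degree $2$. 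To pass from $l-1$ to $l$ I would verify the three hypotheses of Lemma \ref{L:1} for $Y=Z_{l-1}$, $P=P_{i_lj_l}$, $(r,s)=(q_l,p_l)$; its conclusion then reproduces precisely the asserted resolution of $Z_l$, and taking $l=h$ closes the induction.

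The crux is hypothesis (1): that $P_{i_lj_l}$ has a \emph{unique} minimal separating degree in $Z_{l-1}$, equal to $(q_l,p_l)$. For one direction, a separator $F$ of $P_{i_lj_l}$ in $X$ of minimal degree $(q_l,p_l)$ — which exists, has this bidegree, and splits into $(1,0)$- and $(0,1)$-lines by Remark \ref{R:7} and \cite[Proposition 7.4]{M} — vanishes on $X\setminus\{P_{i_lj_l}\}\supseteq Z_{l-1}\setminus\{P_{i_lj_l}\}$ and is nonzero at $P_{i_lj_l}$, so it is a separator for $P_{i_lj_l}$ in $Z_{l-1}$ as well. For the reverse direction I would exploit the distinctness hypothesis: since $i_1,\dots,i_h$ are pairwise distinct and $j_1,\dots,j_h$ are pairwise distinct, none of the points removed before step $l$ lies on $R_{i_l}$ or on $C_{j_l}$, so $Z_{l-1}$ meets $R_{i_l}$ in the same $p_l+1$ points as $X$ and meets $C_{j_l}$ in the same $q_l+1$ points as $X$. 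Given any separator $G$ of $P_{i_lj_l}$ in $Z_{l-1}$ of bidegree $(u,v)$, its restriction to the $(1,0)$-line $R_{i_l}\cong\mathbb P^1$ is a degree-$v$ form that is not identically zero (it is nonzero at $P_{i_lj_l}$) but vanishes at the other $p_l$ points of $Z_{l-1}\cap R_{i_l}$, so $v\ge p_l$; restriction to $C_{j_l}$ gives $u\ge q_l$. Hence every separator has bidegree $\ge(q_l,p_l)$, so $(q_l,p_l)$ is the unique minimal separating degree, which is hypothesis (1) via Lemma \ref{R:4}.

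Hypotheses (2) and (3) of Lemma \ref{L:1} then reduce to comparing $(q_l,p_l)$ against the generator and first-syzygy degrees of $Z_{l-1}$ recorded above. For the degrees inherited from $X$, Lemma \ref{L:2} applied to the interior point $P_{i_lj_l}$ gives $(q_l,p_l)\nleq(a_{1i},a'_{1i})$ and $(q_l+1,p_l+1)\ne(a_{2i},a'_{2i})$ for all $i$. For the degrees $(q_k,p_k)$, $(q_k,p_k+1)$, $(q_k+1,p_k)$ contributed at the earlier steps $k<l$, the chosen ordering is exactly what does the work: from $q_k\le q_l$ (and $p_k\le p_l$ whenever $q_k=q_l$) one gets $(q_l,p_l)\nless(q_k,p_k)$, and one excludes $(q_l+1,p_l+1)=(q_k,p_k+1)$ (it would force $q_k=q_l+1$) and $(q_l+1,p_l+1)=(q_k+1,p_k)$ (it would force $q_k=q_l$ and $p_k=p_l+1$). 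I expect this bookkeeping — together with checking that no summand introduced at an earlier step gets absorbed — to be the part that most needs care: it is precisely here that both the distinctness of the $i_l$ and of the $j_l$ (so the separating degrees do not drop below $(q_l,p_l)$ after earlier removals) and the preliminary reordering (so the new generator and syzygy degrees never cancel against earlier ones) enter. Once all three hypotheses hold, Lemma \ref{L:1} yields the stated resolution of $Z_l$ and the induction goes through.
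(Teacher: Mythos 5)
Your proof is correct and follows essentially the same route as the paper: induction on $h$ after reordering so that the pairs $(q_l,p_l)$ are nondecreasing, with Corollary~\ref{C:2} as the base case and Lemma~\ref{L:1} (whose three hypotheses you check exactly as the paper does, using Lemma~\ref{L:2} for the degrees coming from $X$ and the ordering for the degrees added at earlier steps) driving the inductive step. The only divergence is that you verify hypothesis (1) --- that $(q_l,p_l)$ is the unique minimal separating degree of $P_{i_lj_l}$ for $Z_{l-1}$ --- by a direct and self-contained restriction argument on the lines $R_{i_l}$ and $C_{j_l}$ (valid precisely because the distinctness of the $i_l$ and of the $j_l$ keeps $Z_{l-1}\cap R_{i_l}=X\cap R_{i_l}$ and $Z_{l-1}\cap C_{j_l}=X\cap C_{j_l}$), whereas the paper delegates this to Proposition~\ref{P:3} and Lemma~\ref{R:4}; both are sound.
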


In order to prove Theorem \ref{T:2} we need to remark that it is
always possible to suppose that one of the following conditions holds:
\begin{enumerate}
\item $q_l<q_{l+1}$
\item $q_l=q_{l+1}$ and $p_l\le p_{l+1}$
\end{enumerate}
for any $l=1,\dots,h$. We also need the following result:
\begin{prop}
  \label{P:3}
Given $Z_l=X\setminus \{P_{i_1j_1},\dots,P_{i_lj_l}\}$, for
$l=1,\dots,h-1$, then:
    \[
   H_{Z_{l+1}}(i,j)=
  \begin{cases}
    H_{Z_l}(i,j)& \text{for }(i,j)\ngeq (q_{l+1},p_{l+1})\\
    H_{Z_l}(i,j)-1 & \text{for }(i,j)\ge (q_{l+1},p_{l+1}).
  \end{cases}
  \]
\end{prop}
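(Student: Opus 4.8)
The plan is to prove Proposition \ref{P:3} by induction on $l$, reducing at each step to an application of Lemma \ref{R:4}. Thus it suffices to show that $P_{i_{l+1}j_{l+1}}$ has $(q_{l+1},p_{l+1})$ as its unique minimal separating degree for $Z_l$. First I would recall that, since $X$ is reduced ACM and $P_{i_{l+1}j_{l+1}}=R_{i_{l+1}}\cap C_{j_{l+1}}$, Remark \ref{R:7} gives that $(q_{l+1},p_{l+1})$ is the unique minimal separating degree for $P_{i_{l+1}j_{l+1}}$ in $X$ itself, where $q_{l+1}+1=\#(X\cap C_{j_{l+1}})$ and $p_{l+1}+1=\#(X\cap R_{i_{l+1}})$; equivalently, by Lemma \ref{R:4}, $H_{X\setminus\{P_{i_{l+1}j_{l+1}}\}}$ drops by $1$ exactly on the cone $(i,j)\ge(q_{l+1},p_{l+1})$.

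The key point is then that erasing the previously removed points $P_{i_1j_1},\dots,P_{i_lj_l}$ does not change this behaviour, and this is where the hypothesis $i_1\ne\cdots\ne i_h$, $j_1\ne\cdots\ne j_h$ (all on distinct lines) is essential. Concretely, I would argue that a separator $F$ for $P_{i_{l+1}j_{l+1}}$ in $X$ of degree $(q_{l+1},p_{l+1})$ — which by Corollary \ref{C:2} (via \cite[Proposition 7.4]{M}) can be taken to split as a union of $(1,0)$- and $(0,1)$-lines, namely the $q_{l+1}$ columns of $X$ other than $C_{j_{l+1}}$ together with the $p_{l+1}$ rows other than $R_{i_{l+1}}$ — is still a separator for $P_{i_{l+1}j_{l+1}}$ in $Z_l$: it vanishes on every point of $X\setminus\{P_{i_{l+1}j_{l+1}}\}$, hence a fortiori on every point of $Z_l\setminus\{P_{i_{l+1}j_{l+1}}\}\subset X\setminus\{P_{i_{l+1}j_{l+1}}\}$, and it is nonzero at $P_{i_{l+1}j_{l+1}}$ because that point lies on none of the chosen lines. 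Hence the minimal separating degree for $P_{i_{l+1}j_{l+1}}$ in $Z_l$ is $\le(q_{l+1},p_{l+1})$ in the componentwise order, but it cannot be strictly smaller: any separator of smaller degree for $Z_l$ would, after multiplying by suitable linear forms vanishing on the removed points (using that each removed point sits on a distinct pair of lines, so one can reach it by a single extra line in each direction), produce a separator for $P_{i_{l+1}j_{l+1}}$ in $X$ of degree still $\ngeq(q_{l+1},p_{l+1})$, contradicting Remark \ref{R:7}. One must also check uniqueness of the minimal separating degree, which follows because the degree is forced to be exactly $(q_{l+1},p_{l+1})$.

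Having established that $(q_{l+1},p_{l+1})$ is the unique minimal separating degree of $P_{i_{l+1}j_{l+1}}$ for $Z_l$, I would simply invoke Lemma \ref{R:4} with $X$ replaced by $Z_l$ and $P$ by $P_{i_{l+1}j_{l+1}}$, which yields exactly the claimed formula
\[
H_{Z_{l+1}}(i,j)=
\begin{cases}
H_{Z_l}(i,j) & \text{for }(i,j)\ngeq (q_{l+1},p_{l+1})\\
H_{Z_l}(i,j)-1 & \text{for }(i,j)\ge (q_{l+1},p_{l+1}),
\end{cases}
\]
and the induction closes. I expect the main obstacle to be the lower-bound part of the separating-degree claim, i.e. ruling out that removing earlier points could create a separator of strictly smaller bidegree for $P_{i_{l+1}j_{l+1}}$; handling this cleanly is exactly where the distinct-rows-and-columns hypothesis and the explicit split-curve description of the separator from \cite[Proposition 7.4]{M} do the work, since they guarantee that the geometry around $P_{i_{l+1}j_{l+1}}$ — the number of remaining points on its row and column, which is all that controls the minimal separating degree — is unaffected by the deletions.
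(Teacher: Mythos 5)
Your argument is correct, but it is a genuinely different route from the one the paper takes. The paper disposes of Proposition \ref{P:3} in one line by citing two external results (\cite[Lemma 2.15]{GMR} and \cite[Theorem 3.1]{BM}) applied recursively, so the whole content is outsourced; you instead give a self-contained induction whose engine is the identification of the unique minimal separating degree of $P_{i_{l+1}j_{l+1}}$ for $Z_l$, followed by Lemma \ref{R:4}. Your two-sided argument is sound: the split separator of degree $(q_{l+1},p_{l+1})$ for $P_{i_{l+1}j_{l+1}}$ in $X$ (from \cite[Proposition 7.4]{M}) remains a separator for the subscheme $Z_l$, giving the upper bound; and for the lower bound, a putative separator for $Z_l$ of degree $(u,v)\ngeq(q_{l+1},p_{l+1})$ can be multiplied by the $(1,0)$-forms (if $v<p_{l+1}$) or the $(0,1)$-forms (if $u<q_{l+1}$) of the lines through the already-deleted points --- none of which pass through $P_{i_{l+1}j_{l+1}}$ precisely because of the distinct-rows-and-columns hypothesis --- to produce a separator for $X$ whose degree is still not $\geq(q_{l+1},p_{l+1})$, contradicting Remark \ref{R:7} (together with the fact, implicit in Lemma \ref{R:4}, that every separator degree dominates the unique minimal one). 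Uniqueness of the minimal degree for $Z_l$ then follows since the set of separator degrees is upward closed. What your approach buys is transparency: it makes visible exactly where the hypothesis $i_1\ne\dots\ne i_h$, $j_1\ne\dots\ne j_h$ enters, which the paper's citation hides; what it costs is that you must carry the split-curve description of the separator through the induction, whereas the cited results presumably package this. One cosmetic slip: in describing the split separator you swap the roles of rows and columns (a curve of degree $(q_{l+1},p_{l+1})$ splits as $q_{l+1}$ lines of type $(1,0)$ through the other points of $C_{j_{l+1}}\cap X$ and $p_{l+1}$ lines of type $(0,1)$ through the other points of $R_{i_{l+1}}\cap X$, not the other way around); this does not affect the logic, since only the existence, degree, and non-vanishing at $P_{i_{l+1}j_{l+1}}$ are used.
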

\begin{proof}
  It is sufficient to apply recursively \cite[Lemma 2.15]{GMR} and \cite[Theorem 3.1]{BM}.
\end{proof}

\begin{proof}[proof of Theorem \ref{T:2}]
  We prove the statement by induction on $h$. If $h=1$, then it
  follows by Corollary \ref{C:2}.

Let $h>1$ and suppose that $Z_{h-1}=X\setminus
\{P_{i_1j_1},\dots,P_{i_{h-1}j_{h-1}}\}$ has the following resolution:
\begin{multline*}
0\rightarrow \bigoplus_{l=1}^{h-1}\mathscr O_Q(-q_l-1,-p_l-1)\rightarrow\\
\rightarrow \bigoplus_{i=1}^{m-1} \mathscr O_Q(-a_{2i},-a'_{2i}) 
\oplus \bigoplus_{l=1}^{h-1}\mathscr
O_Q(-q_l,-p_l-1) \oplus  \bigoplus_{l=1}^{h-1}\mathscr
O_Q(-q_l-1,-p_l) \rightarrow\\ 
\rightarrow \bigoplus_{i=1}^m \mathscr O_Q(-a_{1i},-a'_{1i}) \oplus
 \bigoplus_{l=1}^{h-1} \mathscr O_Q(-q_l,-p_l) \rightarrow
\mathscr I_Z\rightarrow 0.
\end{multline*}
To prove the statement we need to verify the conditions of Lemma
\ref{L:1}. By Proposition \ref{P:3} and Lemma \ref{R:4} we see that
$P_{i_hj_h}$ has just one minimal separating degree $(q_h,p_h)$ for
$Z_{h-1}$. Since $P_{i_hj_h}$ is an interior point, then $(i_h,j_h)$ is
comparable with two vertices and so $(q_h,p_h)$ is greater or equal than a
corner. This implies that $(q_h,p_h)\nless (a_{1i},a'_{1i})$ for any
$i=1,\dots,m$ and $(q_h+1,p_h+1)\ne (a_{2i},a'_{2i})$ for any
$i=1,\dots,m-1$. By the fact that one these conditions holds:
\begin{enumerate}
\item $q_h>q_{h-1}$
\item $q_{h-1}=q_{h-1}$ and $p_h>p_{h-1}$
\end{enumerate}
we see that $(q_h,p_h)\nless (q_l,p_l)$ and $(q_h+1,p_h+1)\neq
(q_l,p_l+1),(q_l+1,p_l)$ for any $l=1,\dots,h-1$. So we can apply
Lemma \ref{L:1} and the theorem is proved.
\end{proof}

If $X$ is any ACM zero-dimensional scheme, then Giuffrida, Maggioni
and Ragusa in \cite[Theorem 4.1]{GMR} show that the Hilbert function
of $X$, precisely its first difference, determines the degrees of the generators and
the first syzygies of $X$, so that $\Delta M_X$ determines the minimal
free resolution of $X$. In fact, the degrees of the generators are
equal to the corners for $\Delta M_X$ and those of the first syzygies
are equal to the vertices of $\Delta M_X$. Moreover, the minimal generators of $X$
split into the union of $(1,0)$ and $(0,1)$-lines. For the schemes $Z$
given in Theorem \ref{T:2} we have an analogous result. 

\begin{thm}
  \label{T:1}
Let $X$ be a reduced ACM zero-dimensional scheme and let
$P_{i_1j_1}$,\dots, $P_{i_hj_h}\in X$ be interior points such that $i_1\ne 
\dots \ne i_h$ and $j_1\ne 
\dots \ne j_h$. Let us consider:
\[
Z=X\setminus \{P_{i_1j_1},\dots,P_{i_hj_h}\}.
\]
Let $q_l+1=\#(X\cap C_l)$ and $p_l+1=\#(X\cap R_l)$ for
$l=1,\dots,h$ and let:
\[
r_{ij}=\#\{l\in \{1,\dots,h\}\mid (q_l,p_l)=(i,j)
\},
\]
for any $(i,j)$. Then 
the minimal free resolution of $Z$ is:
\begin{multline*}
0\rightarrow \bigoplus_{l=1}^h\mathscr O_Q(-q_l-1,-p_l-1)\rightarrow\\
\rightarrow \bigoplus_{i=1}^{m-1} \mathscr O_Q(-a_{2i},-a'_{2i}) 
\oplus \bigoplus_{l=1}^h\mathscr
O_Q(-q_l,-p_l-1) \oplus  \bigoplus_{l=1}^h\mathscr
O_Q(-q_l-1,-p_l) \rightarrow\\ 
\rightarrow \bigoplus_{i=1}^m \mathscr O_Q(-a_{1i},-a'_{1i}) \oplus
 \bigoplus_{l=1}^h \mathscr O_Q(-q_l,-p_l) \rightarrow
\mathscr I_Z\rightarrow 0,
\end{multline*}  
there exists a minimal set of generators of $I_Z$ given by curves split into the union
of $(1,0)$ and $(0,1)$-lines and
\[
\Delta M_Z(i,j)=\Delta M_X(i,j)-r_{ij}
\]
for any $(i,j)$. In particular, we see that a pair $(i,j)$ is the
degree of:
\begin{enumerate}
\item a minimal generator for $Z$ if and only if one of the following
  conditions holds:
  \begin{enumerate}
  \item $(i,j)$ is a corner for $\Delta M_Z$
    \item $c_{ij}<0$
  \end{enumerate}
\item a first syzygy for $Z$ if and only if one of the following
  conditions holds:
  \begin{enumerate}
  \item $(i,j)$ is a vertex for $\Delta M_Z$
    \item $c_{ij-1}<0$
    \item $c_{i-1j}<0$
  \end{enumerate}
\item a second syzygy if and only if $c_{i-1j-1}<0$.
\end{enumerate}
 \end{thm}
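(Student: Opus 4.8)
The plan is to split the statement into three pieces: the resolution, the formula for $\Delta M_Z$, and the combinatorial dictionary between $\Delta M_Z$ and the graded Betti numbers. The resolution is exactly Theorem~\ref{T:2}, so nothing is left to do there except observe that the sum over $l=1,\dots,h$ can be regrouped: the term $\bigoplus_{l=1}^h\mathscr O_Q(-q_l,-p_l)$ equals $\bigoplus_{(i,j)}\mathscr O_Q(-i,-j)^{r_{ij}}$, and similarly for the syzygy and second-syzygy terms, so the multiplicity with which $(i,j)$ appears as a generator degree (beyond those of $X$) is precisely $r_{ij}$, as a first-syzygy degree $(i+1,j)$ and $(i,j+1)$ is $r_{ij}$, and as a second-syzygy degree $(i+1,j+1)$ is $r_{ij}$.

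Next I would establish $\Delta M_Z(i,j)=\Delta M_X(i,j)-r_{ij}$. Applying Proposition~\ref{P:3} recursively $h$ times gives
\[
H_Z(i,j)=H_X(i,j)-\#\{l\mid (i,j)\ge (q_l,p_l)\}=H_X(i,j)-\sum_{(u,v)\le(i,j)} r_{uv}.
\]
Taking the double first difference $\Delta M_Z(i,j)=H_Z(i,j)-H_Z(i-1,j)-H_Z(i,j-1)+H_Z(i-1,j-1)$, the $H_X$ part contributes $\Delta M_X(i,j)$, and the correction term telescopes: the inclusion–exclusion over the rectangle with corner $(i,j)$ kills every $r_{uv}$ with $(u,v)<(i,j)$ strictly in either coordinate and leaves exactly $r_{ij}$. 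That yields the displayed formula. As a sanity check, one should note $r_{ij}\le 1$: since the $q_l$ are the column-counts at columns $j_l$ with the $j_l$ pairwise distinct and the $i_l$ pairwise distinct, two indices $l\ne l'$ with $(q_l,p_l)=(q_{l'},p_{l'})$ would force, via the Ferrers shape of the ACM scheme $X$, the same line-count data for two genuinely different rows and columns — one can rule this out from the monotonicity of the Ferrers diagram, and in any case $r_{ij}\in\{0,1\}$ is all that is needed below. (This is the one spot that needs a careful look, so I flag it as the main obstacle: making precise why the hypotheses $i_1\ne\cdots\ne i_h$, $j_1\ne\cdots\ne j_h$ together with interiority force the multiplicities $r_{ij}$ to behave, and in particular that no unexpected cancellation between the "new" summands and the summands coming from $X$ occurs — though the latter is already handled inside the proof of Theorem~\ref{T:2} via Lemma~\ref{L:1}.)

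Finally I would read off the dictionary. Combine the formula $\Delta M_Z=\Delta M_X-r$ with Theorem~\ref{T:3}: wherever $c_{ij}:=\Delta M_Z(i,j)<0$ we must have $r_{ij}=1$ (since $\Delta M_X\ge 0$ for the ACM scheme $X$), hence $(i,j)=(q_l,p_l)$ for exactly one $l$, and the resolution of Theorem~\ref{T:2} contributes a generator in degree $(i,j)$, first syzygies in degrees $(i+1,j)$ and $(i,j+1)$, and a second syzygy in degree $(i+1,j+1)$; this gives conditions (1b), (2b), (2c), (3). Conversely the generator and first-syzygy degrees that are \emph{not} of this "new" type are precisely the $(a_{1i},a'_{1i})$ and $(a_{2i},a'_{2i})$ of $X$, which by \cite[Theorem 4.1]{GMR} are the corners and vertices of $\Delta M_X$; one checks that a corner of $\Delta M_X$ that survives as a corner of $\Delta M_Z$ is a corner of $\Delta M_Z$ in the sense of Definition~\ref{D:3} (and, using that deleting an interior point changes $c$ only on an up-set where it stays $\ge$ the neighbouring values minus a bounded amount, that corners/vertices of $\Delta M_Z$ not caused by a negative entry are exactly the old ones), giving (1a) and (2a). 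The minimality — that these lists have no repetitions forcing further cancellation — is exactly what Theorem~\ref{T:2} already asserts, so no extra work is needed. The statement about generators splitting into unions of $(1,0)$- and $(0,1)$-lines follows because the generators of $X$ already do (they are the ACM generators, products of linear forms by \cite{GMR}) and each new generator in degree $(q_l,p_l)$ can be taken to be the minimal-degree separator for $P_{i_lj_l}$, which by \cite[Proposition 7.4]{M} (invoked as in Corollary~\ref{C:2}) splits into a union of lines; applying this at each stage of the induction in Theorem~\ref{T:2} produces the desired minimal generating set.
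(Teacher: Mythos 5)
Your proposal follows essentially the same route as the paper: the resolution is quoted from Theorem~\ref{T:2}, the formula $\Delta M_Z=\Delta M_X-(r_{ij})$ is obtained by iterating Proposition~\ref{P:3} and taking the double difference, and the splitting of the new generators into unions of lines comes from the minimal separators of \cite[Proposition 7.4]{M} fed through the mapping cone. That is exactly the paper's (very terse) argument.

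One concrete correction, though: the ``sanity check'' you flag as the main obstacle --- that $r_{ij}\le 1$ --- is false, and the paper's own Example~\ref{E:1} refutes it: there $P_{04}$ and $P_{13}$ lie on distinct rows and distinct columns, yet both have $(q_l,p_l)=(3,6)$, so $r_{3,6}=2$ and $\Delta M_Z(3,6)=-2$ (this is why $\mathscr O_Q(-3,-6)^{\oplus 2}$ appears among the generators in that example). The hypotheses $i_1\ne\dots\ne i_h$, $j_1\ne\dots\ne j_h$ prevent two deleted points from sharing a line, not from sharing the pair of line-counts. Fortunately nothing in your argument actually requires $r_{ij}\le 1$: for the dictionary you only need the equivalence $c_{ij}<0\iff r_{ij}\ge 1$, and the forward implication $r_{ij}\ge 1\Rightarrow c_{ij}<0$ is the point you should make explicit instead --- it holds because for an \emph{interior} point the entry $\Delta M_X(q_l,p_l)$ is $0$ (the remark following Definition~\ref{D:3}), so subtracting $r_{ij}\ge 1$ makes it negative; the reverse implication follows from $\Delta M_X\ge 0$ as you say. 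With that substitution (and deleting the claim $r_{ij}=1$, which should read $r_{ij}\ge 1$), your argument matches the paper's.
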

 \begin{proof}
   The statements follows easily by Proposition \ref{P:3} and Theorem
   \ref{T:2}. We only need to remark that, in the proof of Proposition
   \ref{P:3}, there exists a minimal separator of $P_{i_{l+1}j_{l+1}}$ for $Z_{l}$
   that splits into the union of linear forms. So by the mapping cone
   procedure used we get  a minimal set of generators of $I_Z$, each one of them split into the union
of $(1,0)$ and $(0,1)$-lines.
 \end{proof}

\begin{ex}
  \label{E:3}
In this example we show that Theorem \ref{T:1} does not hold if we
admit the possibility that there are two points $P_{i_{k_1}j_{k_1}}$
and $P_{i_{k_2}j_{k_2}}$ such that either $i_{k_1}=i_{k_2}$  or
$j_{k_1}=j_{k_2}$. For example, consider the following reduced ACM
zero-dimensional scheme $X$:
  \begin{figure}[H]
\begin{center}
\begin{tikzpicture}[scale=0.5,font=\fontsize{7}{7}]
\draw (0,1) node {$R_1$};
\draw (0,2) node {$R_0$};
\draw (1,3) node {$C_0$};
\draw (2,3) node {$C_1$};
\draw (3,3) node {$C_2$};
\draw (4,3) node {$C_3$};
\fill[black] (1,1) circle (2.5pt);
\fill[black] (2,1) circle (2.5pt);
\fill[black] (1,2) circle (2.5pt);
\fill[black] (2,2) circle (2.5pt);
\fill[black] (3,2) circle (2.5pt);
\fill[black] (4,2) circle (2.5pt);
\end{tikzpicture}
\end{center}
\end{figure}
and, given $P_{00}=R_0\cap C_0$ and $P_{01}=R_0\cap C_1$, let
$Z=X\setminus \{P_{00},P_{01}\}$. Then $P_{01}$ has for $X$ just one minimal
separating degree that is $(1,3)$ and a separator corresponds to the curve $R_1\cup C_0\cup
C_2\cup C_3$. The point $P_{00}$ for $X\setminus \{P_{01}\}$ has also just one minimal
separating degree that is $(1,2)$ and a minimal separator corresponds
to the curve $R_1\cup
C_2\cup C_3$. So $Z$ cannot have $R_1\cup C_0\cup
C_2\cup C_3$ and $R_1\cup
C_2\cup C_3$ as minimal generators and the statement in Theorem
\ref{T:1} does not hold. Moreover by \cite[Theorem 4.2]{BM} we get
that the first difference of $M_Z$ is the following:
\begin{figure}[H]
\caption{$\Delta M_Z$}
  \centering
\begin{tikzpicture}[x=0.45cm,y=0.45cm,font=\fontsize{7}{7}]
\clip(0,0.5) rectangle (8,6);
  \draw[style=help lines,xstep=1,ystep=1] (1,1) grid (7,5);
\foreach \x in {1,...,6} \draw (\x,1) +(.5,.5)  node {\dots};
\foreach \y in {2,...,4} \draw (6,\y) +(.5,.5) node {\dots};
\foreach \x in {1,...,5} \draw (\x,2) +(.5,.5)  node {$0$};
\foreach \y in {3,...,4} \draw (5,\y) +(.5,.5) node {$0$};
\foreach \x in {1,2} \draw (\x,3.5) +(.5,0) node{$1$};
\draw (3.5,3.5) node {$-1$};
\draw (4.5,3.5) node {$-1$};
\foreach \x in {1,...,4} \draw (\x,4.5) +(.5,0) node{$1$};
\foreach \x in {0,...,5} \draw (\x,5.5) +(1.5,0) node {$\x$};
\foreach \y in {0,...,3} \draw (0.5,4.5-\y) node {$\y$};
\end{tikzpicture}
\end{figure}
So, in this case, we also see that there is no correspondence between the negative
entries in $\Delta M_Z$ and the degrees of a set minimal generators of $I_Z$.
\end{ex}

As an application of Theorem \ref{T:1} we give the following example.

\begin{ex}
  \label{E:1}
Let $X$ be a reduced ACM scheme in $Q$ with the following
configuration of points in a grid of $(1,0)$ and $(0,1)$ lines:
\begin{figure}[H]
\begin{center}
\begin{tikzpicture}[scale=0.5,font=\fontsize{7}{7}]
\fill[black] (1,1) circle (2.5pt);
\fill[black] (2,1) circle (2.5pt);
\fill[black] (1,2) circle (2.5pt);
\fill[black] (1,1) circle (2.5pt);
\foreach \x in {2,...,3} \fill[black] (\x,2) circle (2.5pt);
\foreach \x in {1,...,2} \fill[black] (\x,3) circle (2.5pt);
\fill[black](3,3) circle (2.5pt);
\fill[black] (4,3) circle (2.5pt);
\fill[black] (5,3) circle (2.5pt);
\fill[black] (1,4) circle (2.5pt);
\fill[black] (2,4) circle (2.5pt);
\foreach \x in {3,...,7} \fill[black] (\x,4) circle (2.5pt);
\foreach \x in {1,...,3} \fill[black] (\x,5) circle (2.5pt);
\fill[black] (4,5) circle (2.5pt);
\foreach \x in {5,...,7} \fill[black] (\x,5) circle (2.5pt);
\foreach \x in {1,...,4} \fill[black] (\x,6) circle (2.5pt);
\fill[black] (5,6) circle (2.5pt);
\fill[black] (6,6) circle (2.5pt);
\fill[black] (7,6) circle (2.5pt);
\draw (0,1) node {$R_5$};
\draw (0,2) node {$R_4$};
\draw (0,3) node {$R_3$};
\draw (0,4) node {$R_2$};
\draw (0,5) node {$R_1$};
\draw (0,6) node {$R_0$};
\draw (1,7) node {$C_0$};
\draw (2,7) node {$C_1$};
\draw (3,7) node {$C_2$};
\draw (4,7) node {$C_3$};
\draw (5,7) node {$C_4$};
\draw (6,7) node {$C_5$};
\draw (7,7) node {$C_6$};
\end{tikzpicture}
\end{center}
\end{figure}
By Giuffrida, Maggioni and Ragusa
\cite[Theorem 4.1]{GMR} the minimal free
resolution of $X$ is:
\begin{multline*}
	0\rightarrow \mathscr O_Q(-6,-2)\oplus \mathscr
        O_Q(-5,-3)\oplus \mathscr O_Q(-4,-5)\oplus \mathscr
        O_Q(-3,-7)\rightarrow\\
	\rightarrow \mathscr O_Q(-6,0)\oplus \mathscr
        O_Q(-5,-2)\oplus \mathscr O_Q(-4,-3)\oplus \mathscr
        O_Q(-3,-5)\oplus \mathscr O_Q(0,-7)\rightarrow \mathscr
        I_X\rightarrow 0.
\end{multline*}
Given $Z=X\setminus \{P_{04},P_{13},P_{21},P_{32},P_{40}\}$, 
since the points $P_{04},P_{13},P_{21},P_{32},P_{40}$ are interior
points of $X$ and among them there are no collinear points, we can
apply Theorem \ref{T:1} and we see that $\Delta M_Z$ is the following:
\begin{figure}[H]
  \centering
  \begin{tikzpicture}[x=0.45cm,y=0.45cm,font=\fontsize{7}{7}]
\clip(0,0.5) rectangle (11,10);
  \draw[style=help lines,xstep=1,ystep=1] (1,1) grid (10,9);
\foreach \x in {1,...,9} \draw (\x,1) +(.5,.5)  node {\dots};
\foreach \y in {2,...,8} \draw (9,\y) +(.5,.5) node {\dots};
\foreach \x in {1,...,8} \draw (\x,2) +(.5,.5)  node {$0$};
\foreach \y in {2,...,8} \draw (8,\y) +(.5,.5) node {$0$};
\foreach \x in {1,2} \draw (\x,3.5) +(.5,0) node{$1$};
\draw (3.5,3.5) node {$-1$};
\foreach \x in {4,...,6} \draw (\x,3.5) +(.5,0) node{$0$};
\draw (7.5,3.5) node {$-1$};
\foreach \x in {1,2,3} \draw (\x,4.5) +(.5,0) node{$1$};
\draw (4.5,4.5) node {$0$};
\draw (5.5,4.5) node {$-1$};
\draw (6.6,4.5) node {$0$};
\draw (7.5,4.5) node {$0$};
\foreach \x in {1,...,5} \draw (\x,5.5) +(.5,0) node{$1$};
\draw (6.5,5.5) node {$0$};
\draw (7.5,5.5) node {$-2$};
\foreach \x in {1,...,7} \draw (\x,6.5) +(.5,0) node{$1$};
\foreach \x in {1,...,7} \draw (\x,7.5) +(.5,0) node{$1$};
\foreach \x in {1,...,7} \draw (\x,8.5) +(.5,0) node{$1$};
\foreach \x in {0,...,8} \draw (\x,9.5) +(1.5,0) node {$\x$};
\foreach \y in {0,...,7} \draw (0.5,8.5-\y) node {$\y$};
\end{tikzpicture}
\end{figure}
and its minimal free resolution is:
\begin{multline*}
	0\rightarrow \mathscr O_Q(-6,-3)\oplus \mathscr
        O_Q(-6,-7) \oplus \mathscr
        O_Q(-5,-5) \oplus \mathscr
        O_Q(-4,-7)^{\oplus 2}\rightarrow\\
\rightarrow\mathscr O_Q(-6,-2)\oplus \mathscr
        O_Q(-5,-3)\oplus \mathscr O_Q(-4,-5)\oplus \mathscr
        O_Q(-3,-7)\oplus\\ 
 \oplus\mathscr O_Q(-5,-3)\oplus \mathscr
        O_Q(-6,-2)\oplus \mathscr O_Q(-5,-7)\oplus \mathscr
        O_Q(-6,-6)\oplus\\
\oplus\mathscr O_Q(-5,-4)\oplus \mathscr
        O_Q(-4,-5)\oplus \mathscr O_Q(-4,-6)^{\oplus 2}\oplus \mathscr
        O_Q(-3,-7)^{\oplus 2}\rightarrow\\
\rightarrow \mathscr O_Q(-6,0)\oplus \mathscr
        O_Q(-5,-2)^{\oplus 2}\oplus \mathscr O_Q(-4,-3)\oplus \mathscr
        O_Q(-3,-5)\oplus \mathscr O_Q(0,-7)\oplus\\ 
\oplus\mathscr O_Q(-5,-6)\oplus \mathscr
        O_Q(-4,-4)\oplus \mathscr O_Q(-3,-6)^{\oplus 2}\rightarrow\mathscr
        I_Z\rightarrow 0.
\end{multline*}
In particular, the minimal generators of $X$ of degrees $(6,0)$,
$(5,2)$, $(4,3)$, $(3,5)$ and $(0,7)$ are minimal generators of $Z$
too. The other minimal generators of $Z$ are in degrees $(5,2)$, $(4,4)$,
$(3,6)$ (two in this degree) and $(5,6)$ and they correspond to the
minimal separating degree of the points
$P_{04},P_{13},P_{21},P_{32},P_{40}$ for $X$. Moreover, each of these
points have a separator that corresponds to a curve split in the union
of lines.
\end{ex}

\end{document}